\documentclass[11pt,a4paper]{article}
\usepackage[top=2.5cm,bottom=2.5cm,left=2.2cm,right=2.2cm]{geometry}
\usepackage[T1]{fontenc}
\usepackage[utf8]{inputenc}
\usepackage{color}
\usepackage{graphicx}
\usepackage{amsfonts}
\usepackage{extarrows}
\usepackage{amsmath,amsthm,amssymb,color}
\usepackage{hyperref}
\usepackage{eepic}
\usepackage{lineno}
\usepackage{enumerate}	
\usepackage[utf8]{inputenc}
\usepackage{paralist}
\usepackage{cite}
\usepackage{algorithm}
\usepackage{algorithmicx}
\usepackage{algpseudocode}
\usepackage{authblk}
\usepackage{mathtools}
\usepackage{pifont}
\usepackage[numbers,sort&compress]{natbib}
\renewcommand{\Authand}{~~~~~~~~~~~~~~~~~~~}
\usepackage{comment}
\usepackage{ulem}

\usepackage{tikz}
\usetikzlibrary{positioning}

\hypersetup
{
	colorlinks=true,
	linkcolor=blue,
	filecolor=blue,
	urlcolor=blue,
	citecolor=cyan,
}

\def\blue{\color{blue}}
\def\red{\color{red}}

\usepackage{etoolbox}

\newcommand{\iso}{\simeq}
\newcommand{\niso}{\not \simeq}
\newcommand{\sm}{\setminus}
\newcommand{\cl}{{\rm cl}}
\newcommand{\clm}{{\rm cl}^M}
\newcommand{\claw}{K_{1,3}}
\newcommand{\co}{{\rm co}}
\newcommand{\Lp}{L^{-1}}
\newcommand{\mH}{\mathcal{H}}

\newtheorem{theorem}{Theorem}[section]

\newtheorem{conjecture}[theorem]{Conjecture}

\newtheorem{property}[theorem]{Property}

\def\b{\color{blue}}
\def\r{\color{red}}

\theoremstyle{definition}

\newtheorem{claim}{Claim}

\renewcommand{\thecase}{\arabic{case}}

\AtBeginEnvironment{proof}{\setcounter{case}{0}}
{\setlength{\leftmargini}{1.5\parindent}
	\begin{itemize}
		\setlength{\itemsep}{-1.1mm}}
	{\end{itemize}}

\renewcommand{\baselinestretch}{1.20}

\renewcommand\Authsep{\quad}     
\renewcommand\Authands{\quad}    

\newcommand{\imgi}{{\bf i}}
\newcommand{\e}{{\bf e}}
\begin{document}
	\title{\bf A Note on  Gr\"{u}nbaum's Conjecture about  Longest Cycles and Paths}
	\author[1]{\bf Masaki Kashima \footnote{Email: masaki.kashima10@gmail.com.}}
	\author[2]{\bf Kenta Ozeki \footnote{Email: ozeki-kenta-xr@ynu.ac.jp.}}
	\author[3]{\bf Leilei Zhang \footnote{Email: mathdzhang@163.com.}}
	
	\affil[1]{\footnotesize Faculty of Science and Technology, Keio University, Yokohama 223-8522, Japan}
	\affil[2]{\footnotesize Faculty of Environment and Information Sciences, Yokohama National University, Yokohama 240-8501, Japan}
	\affil[3]{\footnotesize School of Mathematics and Statistics, Key Laboratory of Nonlinear Analysis \& Applications (Ministry of Education), Central China Normal University, Wuhan 430079, China}
	\date{}
	\maketitle
	\begin{abstract}
		
Let $c(G)$ denote the circumference of a graph $G$, i.e., the number of vertices in its longest cycle.  For positive integers $n$ and $k$ with $n>k$, let $\varGamma(n;k)$ be the class of graphs of order $n$ with $c(G) = n - k$ such that every induced subgraph of order $n - k$ is Hamiltonian. When $k = 1$, the class $\varGamma(n; 1)$ coincides with the family of hypohamiltonian graphs—non-Hamiltonian graphs in which the deletion of any single vertex yields a Hamiltonian graph. Replacing Hamiltonian with traceable and $c(G)$ with $p(G)$, the order of a longest path, defines the analogous class $\varPi(n;k)$.  Gr\"{u}nbaum (1974) conjectured that both $\varGamma(n; k)$ and $\varPi(n; k)$ are empty for all $n > k \ge 2$. In this note, we first establish upper bounds on the maximum degree of graphs in the classes $\varGamma(n; k)$ and $\varPi(n; k)$. Using these bounds, we show that $\varGamma(n; k)$ is empty when $n < k^2 + 2k + 3$, and that $\varPi(n; k)$ is empty when $n < k^2 + 2k + 2$. These results provide further evidence supporting Gr\"{u}nbaum’s conjecture.
		
\smallskip
\noindent{\bf Keywords:} longest cycles and paths; Hypohamiltonian graph; Gr\"{u}nbaum conjecture
		
		\smallskip
		\noindent{\bf AMS Subject Classification:} 05C45, 05C38
	\end{abstract}

	\section{Introduction}
	
	We adopt standard notation and terminology in graph theory; undefined terms follow Bondy and Murty \cite{BM}. Throughout, all graphs are assumed to be simple, finite, and undirected. The {\it circumference}  $c(G)$ of a graph $G$ is the length of a longest cycle in $G$.
	A {\it Hamilton cycle} (resp. {\it Hamilton path }) in a graph is a cycle (resp. path) that passes through all vertices. A graph containing a Hamilton cycle is said to be {\it Hamiltonian}. 
	
	In 1974, Gr\"unbaum \cite{G} introduced the family $\Gamma(k,k)$, consisting of all graphs whose circumference equals their order minus $k$, and in which every set of $k$ vertices is avoided by some longest cycle.  In particular, $\Gamma(1,1)$ coincides with the class of {\it hypohamiltonian graphs}—non-Hamiltonian graphs in which the deletion of any single vertex yields a Hamiltonian graph. These graphs have been extensively studied due to their close connection with Hamiltonicity \cite{W,Z,JMOPZ}. In the same paper, Gr\"unbaum \cite{G} formulated the following conjecture concerning the class  $\Gamma(k,k)$.
	
	\begin{conjecture}{\rm (Gr\"{u}nbaum, 1974)}\label{conj1}
		For any integer $k\geq 2$, $\Gamma(k,k)$ is empty.
	\end{conjecture}
	
	The conjecture asserts that any graph  of order $n$ with circumference $n-k$ for $k \ge 2$ has an induced subgraph of order $n-k$ that is non-Hamiltonian. Despite its apparent simplicity, this conjecture has remained open for decades. As Zamfirescu pointed out in \cite{Z1}, “{\it Very little is known about the veracity of this conjecture, which in its general form seems disconcertingly difficult.}” Thomassen \cite{T} investigated the case $k = 2$, and supported Gr\"{u}nbaum's intuition by conjecturing that    $\Gamma(2,2)=\emptyset$. He further observed that any graph in   $\Gamma(2,2)$ must have all of its vertex-deleted subgraphs lying in  $\Gamma(1,1)$, implying that every such subgraph is hypohamiltonian. In a related direction,  Zamfirescu \cite{Z1} and  Goedgebeur, Renders, Wiener and Zamfirescu \cite{GRWZ} studied the broader class of $K_2$-Hamiltonian graphs, namely, graphs in which the removal of any pair of adjacent vertices results in a Hamiltonian graph. They constructed various examples of such graphs and investigated their properties. More recently, Zamfirescu \cite{Z2} showed that for any positive integers $c$ and $k$, there exists an infinite family of $c$-connected graphs with circumference exactly $n - k$, such that, as the order of the graphs tends to infinity, the ratio between the number of $k$-vertex subsets whose deletion results in a Hamiltonian graph and the total number of $k$-vertex subsets tends to $1$. For further results related to these conjectures, we refer the reader to \cite{T1,Z3,KKPS}. 
	
	
	For positive integers $n$ and $k$ with $n\geq k$, let $\varGamma(n;k)$ be the set of all graphs of order $n$ such that $c(G)=n-k$ and every induced subgraph with $n-k$ vertices is Hamiltonian. By definitions, it follows that $\Gamma(k,k)=\bigcup_{n=k}^{\infty}\varGamma(n;k)$ for any $k\geq 1$. For hypohamiltonian graphs, Holton and Sheehan \cite[p.223]{HS} proved that the maximum degree of such a graph cannot exceed $(n-4)/2$. This classical result provides a useful tool for the analysis of hypohamiltonian graphs. In this paper, we first establish an upper bound on the maximum degree of graphs in $\varGamma(n; k)$. 
	
	\begin{theorem}\label{thm3}
		Let $n$ and $k$ be positive integers. If $G \in \varGamma(n; k)$, then
		$$
		\Delta(G) \leq \frac{n - k^2 +1}{2} .
		$$   
	\end{theorem}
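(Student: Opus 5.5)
The plan is to follow the Holton--Sheehan argument for hypohamiltonian graphs, with the single deleted vertex replaced by a well-chosen deleted $k$-set. Let $v$ be a vertex of degree $d=\Delta(G)$.

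\textbf{Step 1 (the cycle).} I will choose a set $X$ of $k$ vertices with $v\in X$ and $X\setminus\{v\}$ containing as few neighbours of $v$ as possible. Ideally $X\setminus\{v\}\subseteq V(G)\setminus N[v]$, which needs $n-1-d\ge k-1$; otherwise the bound is immediate. By the definition of $\varGamma(n;k)$, the graph $G-X$ has a Hamilton cycle $C$ of length $n-k$. Orient $C$, and write $a^+$ for the successor of $a$.

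\textbf{Step 2 (base inequality).} Let $A=N(v)\cap V(C)$, so $|A|=d$. The standard exchange arguments use only that $c(G)=n-k$, so that no cycle has more than $n-k$ vertices:
\begin{itemize}
\item no two vertices of $A$ are consecutive on $C$, since otherwise inserting $v$ gives a cycle of length $n-k+1$;
\item $A^+=\{a^+ : a\in A\}$ is an independent set;
\item no vertex of $A^+$ is adjacent to $v$.
\end{itemize}
The first fact alone gives $2d\le n-k$. The theorem requires the stronger inequality
$$\sum_{\text{gaps}}(\text{gap length}-1)\ \ge\ k^2-k-1,$$
where the sum is over the $d$ maximal segments of $C$ lying strictly between consecutive vertices of $A$.

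\textbf{Step 3 (using $X\setminus\{v\}$).} To get this surplus I would use the $k-1$ vertices $x\in X\setminus\{v\}$. For such an $x$ and any $a,b\in A$:
\begin{itemize}
\item $x$ cannot be adjacent to both $a^+$ and $b^+$. Otherwise the cycle $v\,a\,\overleftarrow{C}\,b^+\,x\,a^+\,\overrightarrow{C}\,b\,v$ has $n-k+2$ vertices.
\item Symmetric statements hold for predecessors.
\item Mixed statements, such as $x$ adjacent to $a^+$ and $b^-$, can be handled the same way.
\end{itemize}
Further, the hypothesis allows swaps. For every $w\in V(C)$, the graph $G-((X\setminus\{x\})\cup\{w\})$ is Hamiltonian. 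This gives a cycle of length $n-k$ through $x$ that avoids $w$. Choosing $w$ inside a short gap, I would try to show the following: if $x$ has many neighbours inside gaps of length $1$ or $2$, then rerouting this new cycle through $v$ produces a cycle longer than $n-k$.

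The target claim is that each $x\in X\setminus\{v\}$ forces about $k$ extra vertices in the gaps, beyond the one vertex per gap already counted. It must also do so on vertex sets that are disjoint for distinct $x$. Summing over the $k-1$ choices of $x$ would give the surplus $(k-1)k-1$ and hence $2d\le n-k^2+1$. A natural source of the $k$ extra vertices is that $x$, having degree at least $k+1$ (since $G-x$ must lie in $\varGamma(n-1;k-1)$-like position and $G$ is $(k+1)$-connected-ish), sends edges into gaps. Each such edge forces the corresponding gap to lengthen, by the Step 3 exchange.

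\textbf{Main obstacle.} I expect the main difficulty to be this last counting step. The issues are, first, proving the needed lower bound on the number of neighbours of each $x$ on $C$. Second, ensuring the forced gap-lengthenings for different $x$ are not double counted. I would first try to establish a minimum degree bound $\delta(G)\ge k+1$ from the hypothesis, and then assign to each $x$ the gap vertices adjacent to it. The exchange lemmas above show these assigned sets must be essentially disjoint.
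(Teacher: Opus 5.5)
\textbf{There is a genuine gap.} Your Step 2 correctly reduces the theorem to the surplus inequality. But Step 3 carries the whole content, and it is only a hope; the tools you set up cannot deliver it.

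The exchanges you list show only that each $x\in X\setminus\{v\}$ has at most one neighbour in $A^+$ and at most one in $A^-$. The ``mixed'' claim is false. Take an $8$-cycle $1\,2\cdots 8$ with $v\sim 1,5$ and $x\sim 2,4$. There is no cycle longer than $8$, because vertex $3$ forces $x\,2\,3\,4$ to close up into a $4$-cycle.

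Nothing you have constrains neighbours of $x$ lying in $A$ or deep inside a gap. So there is no mechanism converting edges at $x$ into extra gap length, let alone on disjoint sets for distinct $x$. The underlying problem is your choice of $X$. A detour that leaves $C$, runs through $X$ and returns adds only the vertices of the path it follows in $G[X]$. With $X\setminus\{v\}$ an unstructured set of non-neighbours of $v$, $v$ is isolated in $G[X]$. Such detours then carry one or two vertices and cannot force gaps of length proportional to $k$.

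\textbf{The missing idea} is to make $X$ itself an induced path $P=u_1u_2\cdots u_k$ with $u_1=v$. That is, put a neighbour of $v$ into $X$ rather than avoid $N(v)$.

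Such a path exists. Suppose a longest induced path $u_1\cdots u_\ell$ from $v$ had $\ell\le k$. Delete a $k$-set containing $u_1,\dots,u_{\ell-1}$ but not $u_\ell$, and take the resulting Hamilton cycle. By maximality, the neighbour $y$ of $u_\ell$ on that cycle is adjacent to some $u_i$ with $i<\ell$. Replacing the edge $u_\ell y$ by $P[u_\ell,u_i]\,u_iy$ gives a cycle longer than $n-k$, a contradiction.

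Next, partition the Hamilton cycle $C_0$ of $G-V(P)$ at the neighbours $v_1,\dots,v_s$ of $P'=P-u_1$, not at the neighbours of $v$. Use the following notation:
\begin{itemize}
\item $m_i,M_i$ are the least and greatest indices of neighbours of $v_i$ on $P$;
\item $d_{P'}(v_i)$ counts the neighbours of $v_i$ in $P'$;
\item $\varepsilon_i$ records whether $v_iu_1\in E(G)$;
\item $r_i$ counts the neighbours of $u_1$ in the segment $Q_i$ between $v_i$ and $v_{i+1}$.
\end{itemize}
Two detours run along long stretches of $P$. If $u_1$ has no neighbour in $Q_i$, use $v_iu_{m_i}P[u_{m_i},u_{M_{i+1}}]u_{M_{i+1}}v_{i+1}$ or its mirror image. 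If $u_1$ has a neighbour $w$ in $Q_i$, use $v_iu_{M_i}P[u_{M_i},u_1]u_1w$. Either way, these force
$|V(Q_i)|\ge\frac12(d_{P'}(v_i)+d_{P'}(v_{i+1})+\varepsilon_i+\varepsilon_{i+1})+2r_i$.

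Summing over the disjoint segments gives exactly the accounting you were looking for: $n-k\ge |E_G(P',C_0)|+2\Delta(G)-2$.

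Finally, you need connectivity $k+2$, not $k+1$. Deleting any $k$ vertices leaves a Hamiltonian, hence $2$-connected, graph, so $G$ is $(k+2)$-connected. Since $P$ is induced, every $u_j$ with $2\le j\le k-1$ then has at least $k$ neighbours on $C_0$, and $u_k$ has at least $k+1$. So $|E_G(P',C_0)|\ge k^2-k+1$, and the stated bound follows. With only $\delta(G)\ge k+1$, this count would give $k^2-2k+2$ and miss the constant.
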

	
	The question “{\it For which values of n do there exist hypohamiltonian graphs on n vertices ?} ” is a fundamental and interesting problem in the study of hypohamiltonian graphs, and has therefore attracted considerable attention from many researchers. In 1964, Gaudin, Herz and Rossi \cite{GHR} proved that there exists no hypohamiltonian graph of order less than 10, and that the Petersen graph is the only hypohamiltonian graph with order 10. This can be regarded as one of the earliest results in the study of hypohamiltonian graphs. Subsequently, researchers discovered hypohamiltonian graphs of orders $n = 13, 15, 16$ and $n \ge 18$, and proved that no hypohamiltonian graphs exist for $n = 11, 12$ or $14$. For more details, see (\cite[p223-229]{HS}). Finally, in 1997, Aldred, McKay and Wormald \cite{AMW} proved that no hypohamiltonian graph exists on 17 vertices, thus settling the problem completely. Thomassen made numerous important contributions to the study the order of hypohamiltonian graphs \cite{T2,T1}.  Further results and related discussions on this question can be found in \cite{JMOPZ,T3}.
	
	Motivated by Conjecture~\ref{conj1} and the result of Gaudin, Herz and Rossi \cite{GHR}, in this note, we investigate a lower bound on the order of graphs in $\varGamma(n; k)$ for $k \ge 2$, if such graphs exist. In fact, since every graph in $\varGamma(n; k)$ is $(k + 2)$-connected, its minimum degree must be at least $k + 2$ (see Property \ref{lem2} in this paper). By applying Theorem \ref{thm3}, we can easily obtain the following theorem.
	
	\begin{theorem}\label{thm1}
		For any positive integers $n$ and $k$, if $n< k^2+2k+3$, then $\varGamma(n; k) = \emptyset.$
	\end{theorem}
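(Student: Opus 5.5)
Assume for contradiction that some $G \in \varGamma(n;k)$ exists with $n < k^2+2k+3$. We combine the upper bound on the maximum degree from Theorem~\ref{thm3} with the lower bound on the minimum degree from Property~\ref{lem2}, and show the two are incompatible in this range of $n$. By Property~\ref{lem2}, $G$ is $(k+2)$-connected. Hence $\delta(G) \ge k+2$, and in particular $\Delta(G) \ge k+2$. Theorem~\ref{thm3} gives $\Delta(G) \le (n-k^2+1)/2$. Chaining these,
$$ k+2 \le \Delta(G) \le \frac{n-k^2+1}{2}, $$
so $n \ge k^2+2k+3$, contradicting the assumption.

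The argument needs some care with small or degenerate cases. The definition allows $n \ge k$, and $c(G) = n-k$ must be a genuine cycle length, so $n-k \ge 3$ is needed for $G$ to contain a cycle at all. If Property~\ref{lem2} is stated only for $n-k \ge 3$, or only for $k \ge 1$ with nontrivial graphs, I will separately dispose of the cases where $n-k \le 2$, since no cycle of that length exists. For $k=1$, the bound $n < 6$ is consistent with the classical fact that hypohamiltonian graphs have at least $10$ vertices. I will also check that Theorem~\ref{thm3} applies to every $n$ in the range considered, since it is stated for all positive $n,k$.

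The main obstacle is not the inequality chase but the correctness of Property~\ref{lem2}, the $(k+2)$-connectivity of graphs in $\varGamma(n;k)$. Since it is stated earlier, I may assume it. If I had to justify it, I would argue as follows. Suppose a set $S$ with $|S| \le k+1$ separates $G$. Delete $k$ vertices so that the remaining $n-k$ vertices still contain a separator of size at most $1$, or delete vertices of $S$ itself. The remaining induced subgraph of order $n-k$ must be Hamiltonian, hence $2$-connected, which is impossible if a cut vertex remains or the graph is disconnected. This requires choosing the $k$ deleted vertices so that both sides of the separation keep at least one vertex, which is possible when $n-k$ is large enough.
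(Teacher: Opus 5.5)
Your argument is correct and is exactly the paper's: Property~\ref{lem2} gives $\Delta(G)\ge\delta(G)\ge k+2$, and combining this with $\Delta(G)\le (n-k^2+1)/2$ from Theorem~\ref{thm3} forces $n\ge k^2+2k+3$. Your extra discussion of degenerate cases and your sketch of why Property~\ref{lem2} holds are not needed, since both ingredients are already established in the paper.
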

	
	Following Kapoor, Kronk and Lick \cite{KKL}, we call a longest path in a graph $G$ a {\it detour} of $G$. For positive integers $n$ and $k$, let $\varPi(n; k)$ be the set of graphs $G$ of order $n$ such that the detour order   of $G$ is equal to $n-k$ and $G-S$ has a Hamilton path for any subset $S \subseteq V (G)$ of order $k$. Note that the class $\varPi(n; 1)$ consists of hypotraceable graphs, i.e., non-traceable graphs of which every vertex-deleted subgraph is traceable. In this paper, we also establish an upper bound for the maximum degree of graphs belonging to $\varPi(n; k)$.
	\begin{theorem}\label{thm4}
		Let $n$ and $k$ be positive integers. If $G \in \varPi(n; k)$, then
		$$
		\Delta(G) \leq \frac{n - k^2 }{2} .
		$$   
	\end{theorem}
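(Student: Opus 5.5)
The plan is to adapt the Holton--Sheehan degree argument for hypohamiltonian graphs to Hamilton paths of $G-S$. Fix $G \in \varPi(n;k)$ and a vertex $v$ of degree $d=\Delta(G)$. The goal is to show that a suitable Hamilton path of $G-S$, for a well-chosen $k$-set $S \ni v$, has at least $2d+k(k-1)$ vertices. Since that path has exactly $n-k$ vertices, this gives $n-k \ge 2d + k^2 - k$, which is the claimed bound $d \le (n-k^2)/2$.

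\emph{Choice of $S$ and the basic count.} Take $S=\{v,u_1,\dots,u_{k-1}\}$ with every $u_i \notin N(v)$, so that all $d$ neighbours of $v$ remain in $G-S$.
\begin{itemize}
\item If $v$ has fewer than $k-1$ non-neighbours, then $d \ge n-k+1$, and I expect this to be ruled out directly. Otherwise such an $S$ exists.
\item By the definition of $\varPi(n;k)$, $G-S$ has a Hamilton path $P=x_1x_2\cdots x_{n-k}$.
\item Because the detour order of $G$ is exactly $n-k$, $v$ can be neither attached at an end of $P$ nor inserted between two consecutive vertices of $P$. Hence $x_1,x_{n-k}\notin N(v)$, and no two neighbours of $v$ are consecutive on $P$.
\end{itemize}
So the successors $x_i^+$ of the neighbours $x_i\in N(v)$ are $d$ distinct vertices outside $N(v)$. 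Together with $x_1$ this already gives $n-k \ge 2d+1$.

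\emph{Extra vertices from $u_1,\dots,u_{k-1}$.} The remaining $k(k-1)$ vertices should come from the deleted vertices. For each $u=u_j$, use longer exchanges that would produce a path on $n-k+2$ vertices.
\begin{itemize}
\item Suppose $v\sim x_i,x_l$ with $i<l$ and $u\sim x_i^+,x_l^+$. Then
\[
x_1\cdots x_i\, v\, x_l\, x_{l-1}\cdots x_{i+1}\, u\, x_{l+1}\cdots x_{n-k}
\]
is a path on $n-k+2$ vertices, a contradiction. Hence $u$ has at most one neighbour in $N(v)^+$.
\item Symmetrically, $u$ has at most one neighbour in $N(v)^-$.
\item Likewise $u$ is not adjacent to $x_1$ or $x_{n-k}$ in combination with a suitable $v$-neighbour.
\end{itemize}
Since $G$ is $(k+2)$-connected, by Property~\ref{lem2}, $u$ has at least $k+2$ neighbours. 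Those of its neighbours on $P$ that lie neither in $N(v)$ nor in $N(v)^{\pm}$ must then be accounted for in gaps of the alternating pattern. The aim is to show that each $u_j$ forces at least $k$ vertices of $P$ beyond the $2d$ counted above, and that these sets are disjoint for different $j$. Disjointness would be obtained by inserting $u_j$ and $u_{j'}$ simultaneously into different gaps and again obtaining a path longer than $n-k$.

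\emph{Expected main obstacle.} The main difficulty is this last counting step. The exchange arguments must be organised so that the $k$ extra vertices charged to each $u_j$ are genuinely new, that is, distinct from the $N(v)\cup N(v)^+$ structure and from those charged to the other $u_{j'}$. The difficulty is also to keep them new when $u_j$ is adjacent to vertices of $N(v)$, or to other $u_{j'}$, which is allowed.

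If the direct charging fails, the first fallback is to vary the choice of $S$. For instance, one could let $S$ range over sets $\{v\}\cup T$ and average the resulting inequalities. Alternatively, one could use that $G-u_j$ (for $k\ge 2$) inherits the analogous property with $k-1$. That would give an induction on $k$ in which each step gains an additional $\approx 2k$ in the bound, matching $k^2$ as a sum of odd terms.
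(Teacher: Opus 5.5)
Your first step is fine: with all of $N(v)$ on a Hamilton path of $G-S$, the end and insertion arguments give $n-k\ge 2d+1$, a Holton--Sheehan-type bound. There is a genuine gap after that. The entire quadratic term $k(k-1)$ is left unproved, and with your choice of $S$ it cannot be obtained the way you sketch.

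\begin{itemize}
\item The $u_j$ are arbitrary non-neighbours of $v$, so nothing controls their neighbourhoods. Up to $k-2$ neighbours of $u_j$ may lie in $S$, so only about three are guaranteed on the path.
\item Nothing in your exchanges prevents those few neighbours from lying in $N(v)$, where they are already counted. Also, single-vertex insertions only certify one vertex per exchange.
\item So there is no mechanism that charges $k$ fresh vertices to each $u_j$, let alone disjointly.
\item You invoke Property~\ref{lem2}, but $(k+2)$-connectivity is established only for $\varGamma(n;k)$. For $\varPi(n;k)$ the definition gives only $(k+1)$-connectivity, i.e.\ $\delta(G)\ge k+1$.
\item The induction fallback does not rescue this. $G-u\in\varPi(n-1;k-1)$ is true, but it yields only $\Delta(G)\le (n-1-(k-1)^2)/2$ (taking $u\notin N[v]$). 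The extra $2k-2$ vertices needed per step are exactly the missing content.
\end{itemize}

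The missing idea is to make the deleted set a path through the maximum-degree vertex, rather than a set avoiding its neighbourhood. The paper takes $S=V(P)$ for an induced path $P=x_1x_2\cdots x_k$ starting at the maximum-degree vertex $x_1$ (Property~\ref{lem3}). This deliberately sacrifices the neighbour $x_2$.

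Let $y_1,\dots,y_s$ be the attachment vertices of $P'=P-x_1$ on a Hamilton path $P_0$ of $G-S$. The segment of $P_0$ between $y_i$ and $y_{i+1}$ can be replaced either by a subpath of $P$ joining extreme neighbours of $y_i$ and $y_{i+1}$, or by a detour through $x_1$. Maximality of $P_0$ then gives Claims~\ref{claim2} and~\ref{claim3}. Summing them yields $n-k\ge k+|E_G(P',P_0)|+2\Delta(G)-2$.

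Because $P$ is induced and $\delta(G)\ge k+1$, each inner $x_j$ sends at least $k-1$ edges to $P_0$ and $x_k$ sends at least $k$. Hence $|E_G(P',P_0)|\ge (k-2)(k-1)+k$, which is where $k^2$ comes from. The quadratic term comes from rerouting $P_0$ through long subpaths of the deleted set, weighted by the edge count. Your one-vertex-at-a-time exchanges never access this.
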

	
	Gr\"{u}nbaum \cite{G} also conjectured that $\varPi(n;k)=\emptyset$ for all $n$ and $k$ with $n\geq k\geq 2$. In this paper, by using Theorem \ref{thm4}, we confirm this conjecture for the case $n<k^2+2k+2$.
	
	\begin{theorem}\label{thm2}
		For any positive integers $n$ and $k$, if $n<k^2+2k+2$, then $\varPi(n; k) = \emptyset.$
	\end{theorem}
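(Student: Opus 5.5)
We argue by contradiction and combine Theorem~\ref{thm4} with a lower bound on the minimum degree, just as Theorem~\ref{thm1} follows from Theorem~\ref{thm3} and Property~\ref{lem2}. Suppose $G\in\varPi(n;k)$ with $n<k^2+2k+2$. We may assume $n-k\ge 2$, so that $G$ has an edge. The excluded case $n-k=1$ means $G$ is edgeless. The edgeless graph on $k+1$ vertices does formally satisfy the definition, so this case has to be set aside by the paper's standing convention (for instance, that $G$ is connected, as for hypotraceable graphs). The plan is then to show $\delta(G)\ge k+1$ and compare with Theorem~\ref{thm4}.

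\emph{Step 1: $\delta(G)\ge k+1$.} Let $v$ be a vertex with $d(v)\le k$. Choose $S\subseteq V(G)\setminus\{v\}$ with $|S|=k$ and $N(v)\subseteq S$. This is possible since $n-1\ge k$. Then $v$ is isolated in $G-S$. But $G-S$ has $n-k\ge 2$ vertices and, by the definition of $\varPi(n;k)$, has a Hamilton path, which is impossible. Hence $d(v)\ge k+1$ for every vertex $v$.

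Step 1 can be pushed to $\delta(G)\ge k+2$. If $d(v)=k+1$, take $S\subseteq N(v)$ with $|S|=k$, so that $v$ has exactly one neighbour $u$ in $G-S$. Any Hamilton path $P$ of $G-S$ must then end at $v$. Prepending the vertex of $N(v)\cap S$ to $P$ gives a path on $n-k+1$ vertices, contradicting that the detour order is $n-k$. This stronger bound is not needed for the stated range.

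\emph{Step 2: conclude.} By Theorem~\ref{thm4},
\[
k+1\le \delta(G)\le \Delta(G)\le \frac{n-k^2}{2},
\]
so $n\ge k^2+2k+2$. This contradicts the assumption, hence $\varPi(n;k)=\emptyset$ for $n<k^2+2k+2$. Using $\delta(G)\ge k+2$ instead would even give $n\ge k^2+2k+4$.

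The only substantive ingredient is Theorem~\ref{thm4}, which we assume. The remaining point needing care is the degenerate case $n-k=1$ and the convention that excludes it. The minimum-degree step itself is straightforward.
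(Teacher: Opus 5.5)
Your proof is correct and is essentially the paper's argument: the paper obtains $\delta(G)\ge k+1$ from the $(k+1)$-connectivity of graphs in $\varPi(n;k)$ and combines it with Theorem~\ref{thm4}, exactly as you do in Step 2. Your remark that the edgeless graph on $k+1$ vertices formally lies in $\varPi(k+1;k)$ is accurate, since the paper's connectivity claim silently excludes this case. Your strengthening to $\delta(G)\ge k+2$ is also valid, though ``the vertex of $N(v)\cap S$'' should read ``any vertex of $S$''.
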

	
	\section{Proofs of the Main Results}
	
	In this section, we present  proofs of our main results. The neighborhood of a vertex $x$ in a graph $G$ is denoted by $N(x)$ or $N_G (x)$, and the closed neighborhood of $x$ is $N[x]=N(x) \cup\{ x \}$. The degree of $x$ is denoted by $d(x)$. We denote by $\delta(G)$ and $\Delta(G)$ the minimum degree and maximum degree of $G$, respectively. For a vertex subset $ S \subseteq V (G)$, we use $G[S]$ to denote the subgraph of $G$ induced by $S$, and use $N(S)$ to denote the neighborhood of $S$; i.e., $ N(S) = \{\, y \in V(G) \setminus S \mid y \text{ has a neighbor in } S \,\} $. Given two vertex-disjoint subgraphs $S$ and $T$ of $G$, we denote by  $E_G(S,T)$ the set of edges having one endpoint in $S$ and the other in $T$. 
	
	We use $P[u, v] $ to denote the subpath of a path $P$ with endpoints $u$ and $v$. For a directed cycle $\overrightarrow{C}$, we use $\overrightarrow{C}[u, v]$ to denote the subpath of $\overrightarrow{C} $ from $u$ to $v$ along the direction of $\overrightarrow{C}$. 
	We begin with some structural properties that provide a necessary condition for any graph in the class $\varGamma(n; k)$. 
	
	\begin{property}\label{lem1}
		Let $n$ and $k$ be a positive integer with $n\geq k\geq 2$. If $G\in \varGamma(n;k)$, then for every vertex $u_1$ of $G$, $G$ has an induced path of order at least $k+1$ with an endpoint $u_1$.
	\end{property}
	
	\noindent{\it Proof.}  Let $u_1$ be a vertex in $G.$ Assume to the contrary that $G$ contains no induced path of order at least $k + 1$ with an endpoint $u_1$. Let $P= u_1 u_2 \cdots u_\ell$ be a longest induced path in $G$. Since $\ell \leq k$, we may select a set $S \subseteq V(G)$ of cardinality $k$ such that $u_\ell \notin S$ and $V(P) \setminus \{u_\ell\} \subseteq S$. Let $C$ be a Hamilton cycle in $G - S$, and let $v$ be a neighbor of $u_\ell$ in $C$. Since $v \notin V(P)$, by the maximality of $P$, there must exist a vertex $u_i \in V(P) \setminus \{u_\ell\}$ such that $vu_i \in E(G)$. Then the cycle obtained by replacing the edge $u_\ell v$ in $C$ with the path $P[u_\ell, u_i] \cup u_i v$ has length at least $n - k + 1$, contradicting the assumption that the circumference of $G$ is exactly $n - k$. \hfill $\Box$
	\bigskip
	
	Graphs in $\varGamma(n; k)$ also exhibit high connectivity. Indeed, note that if any set of $k$ vertices is removed from $G \in \varGamma(n; k)$, the remaining graph is Hamiltonian, and hence 2-connected. This implies that the connectivity of $G$ is at least $k+2$. We formalize this observation in the following property.
	
	\begin{property}\label{lem2}
		Let $G$ be a graph in $\varGamma(n; k)$. Then $G$ is $(k+2)$-connected.
	\end{property}
	
	\medskip
	
	With Properties~\ref{lem1} and~\ref{lem2}, we are now ready to prove our main theorem. \\
	
	\begin{figure}[!ht]
		\centering
		\includegraphics[width=115mm]{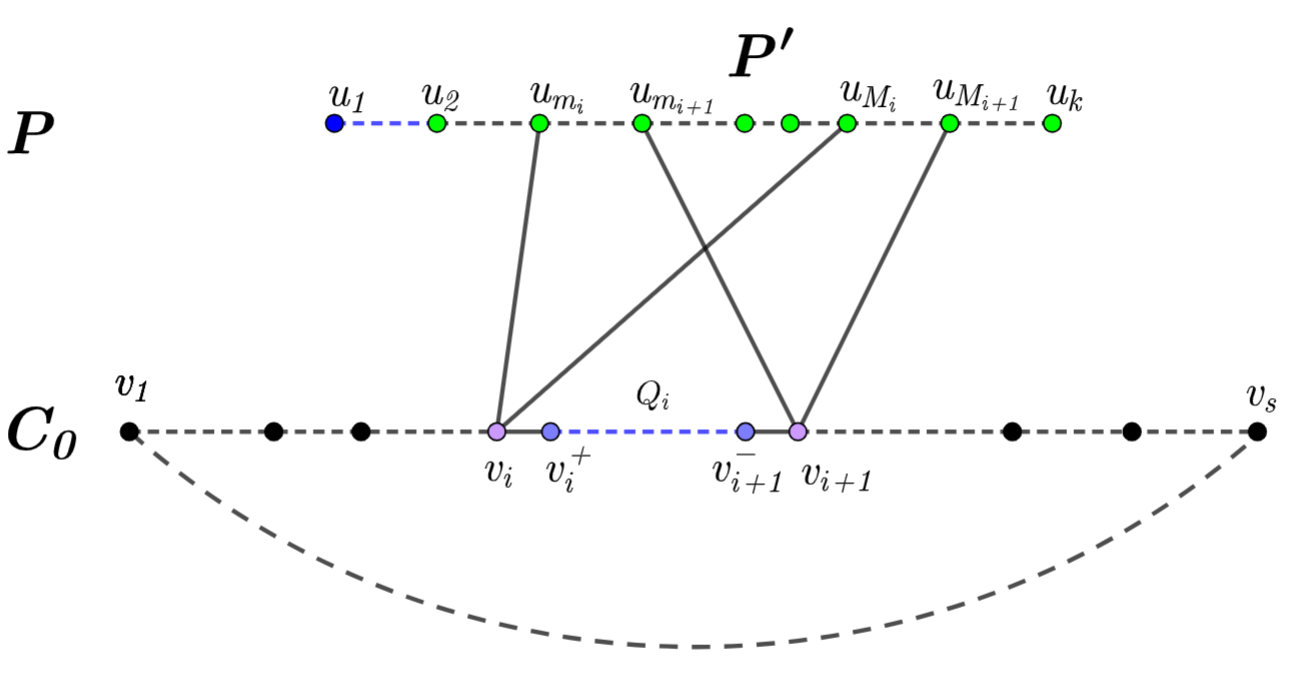}
		\caption{The induced path  $P,$ $P'$ and the cycle  $C_0$ in the proof of Theorem \ref{thm3}.}
		\label{fig1}
	\end{figure}
	
	\noindent{\it Proof of Theorem \ref{thm3}.}  Let $G \in \varGamma(n; k)$. By the definition of $\varGamma(n; k)$, the circumference of $G$ is $n - k$. By Property~\ref{lem2}, $G$ is $(k + 2)$-connected, and hence its minimum degree is at least $k+2$.
	
	Let $u_1$ be a vertex of $G$ with maximum degree. By Property \ref{lem1}, the graph $G$ contains an induced path of order at least $k$ with an endpoint $u_1$. Let $P$ denote such an induced path, and label its vertices consecutively as $u_1, u_2, \ldots, u_{k}$, where $u_i \sim u_{i+1}$ for all $1 \leq i \leq k-1$.  As $G\in \varGamma(n; k)$, $G-V(P)$ has a Hamilton cycle $C_0$, see Fig.\ref{fig1}.  Fix an orientation of $C_0$, and let $\overrightarrow{C_0}$ denote the cycle $C_0$ with this orientation. For each vertex $v \in V(C_0)$, we denote by $v^-$ the predecessor of $v$ and by $v^+$ the successor of $v$ with respect to the orientation of the cycle $\overrightarrow{C_0}$.   Set $P'=P-u_1$. Let $N_G(P') \cap V(C_0) = \{v_1, v_2, \ldots, v_s\},$ where the indices are ordered increasingly along the orientation $\overrightarrow{C_0}$.
	Since $G$ is $(k+2)$-connected, we have $s \geq k+1$. 
	For each $v_i \in N_G(P') \cap V(C_0)$, define
	$$N_{P}(v_i) = N_G(v_i) \cap V(P), \quad d_{P'}(v_i) = |N_{P}(v_i) \setminus \{u_1\}|,
	\quad \text{and}  \quad \varepsilon_i = 
	\begin{cases} 
		1 & \text{if $u_1$ is a neighbor of $v_i$,} \\
		0 & \text{otherwise}.
	\end{cases} 
	$$
	Note that 
	$|N_{P}(v_i)| = d_{P'}(v_i) + \varepsilon_i$,
	and $\sum_{i=1}^s \varepsilon_i \leq s$.
	For each $i \in \{1,2,\ldots,s\}$, let $Q_i = \overrightarrow{C_0}[v_i^+,v_{i+1}^-]$, 
	where $v_{s+1} = v_1$,
	and let $r_i = |V(Q_{i}) \cap N_G(u_1)|$.
	Thus, we have
	$\Delta(G) = d_G(u_1) = 1+ \sum_{i=1}^s (r_i + \varepsilon_i)$.

	The following claim is crucial for our proof.
	
	\begin{claim}\label{claim:Qi}
		For each $i \in \{1, \ldots, s\}$, we have
		\begin{equation*}\label{eq1}
			|V(Q_i)| \geq \frac{d_{P'}(v_i) + d_{P'}(v_{i+1})+ \varepsilon_i + \varepsilon_{i+1}}{2} + 2r_i,
		\end{equation*}
		where $\varepsilon_{s+1} = \varepsilon_1$.
	\end{claim}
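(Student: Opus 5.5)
The plan is to compare $C_0$ with longer cycles obtained by rerouting it through $P$. The starting point is that $c(G)=n-k=|V(C_0)|$, so no cycle of $G$ is longer than $C_0$. Write $N_P(v)$ for the neighbours of $v$ on $P$, and let $a^-\le a^+$ (resp.\ $b^-\le b^+$) be the smallest and largest indices $j$ with $u_j\in N_P(v_i)$ (resp.\ $u_j\in N_P(v_{i+1})$). Since $v_i,v_{i+1}\in N_G(P')$, we have $a^+,b^+\ge 2$. The neighbours of $v_i$ on $P$ lie among $u_{a^-},\dots,u_{a^+}$, so
\[
d_{P'}(v_i)+\varepsilon_i=|N_P(v_i)|\le a^+-a^-+1,
\]
and the analogous bound holds for $v_{i+1}$.

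\emph{Case $r_i=0$.} For any $u_a\in N_P(v_i)$ and $u_b\in N_P(v_{i+1})$, consider the cycle
\[
\overrightarrow{C_0}[v_{i+1},v_i]\; v_iu_a\; P[u_a,u_b]\; u_bv_{i+1}.
\]
It omits $Q_i$ and gains $|a-b|+1$ vertices of $P$. Its length is therefore $n-k-|V(Q_i)|+|a-b|+1$, and maximality of $C_0$ gives $|V(Q_i)|\ge |a-b|+1$. (If $a=b$ this is a cycle through the common neighbour $u_a$, and the bound $|V(Q_i)|\ge1$ still holds.)

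Apply this with $(a,b)=(a^+,b^-)$ and with $(a,b)=(a^-,b^+)$, and average the two bounds. Since $|a^+-b^-|+|b^+-a^-|\ge (a^+-a^-)+(b^+-b^-)$, this gives
\[
|V(Q_i)|\ge \frac{(a^+-a^-+1)+(b^+-b^-+1)}{2}\ge \frac{|N_P(v_i)|+|N_P(v_{i+1})|}{2},
\]
which is the claim when $r_i=0$.

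\emph{Case $r_i\ge1$.} Let $w_1,\dots,w_{r}$ (with $r=r_i$) be the neighbours of $u_1$ in $Q_i$, in order along $\overrightarrow{C_0}$. I will split $Q_i$ into pieces and bound each piece separately.

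\begin{itemize}
\item \emph{The piece before $w_1$.} Use the cycle that goes along $\overrightarrow{C_0}[w_1,v_i]$ (the long way round), then $v_iu_{a^+}$, then $P[u_{a^+},u_1]$, then $u_1w_1$. It drops the segment $\overrightarrow{C_0}[v_i^+,w_1^-]$ and adds $a^+$ vertices of $P$. Hence this segment has at least $a^+\ge |N_P(v_i)|$ vertices.
\item \emph{The piece after $w_r$.} Symmetrically, $\overrightarrow{C_0}[w_r^+,v_{i+1}^-]$ has at least $b^+\ge|N_P(v_{i+1})|$ vertices.
\item \emph{The gaps between consecutive $w_j$.} If $w_{j+1}=w_j^+$, then replacing the edge $w_jw_j^+$ by the path $w_ju_1w_{j+1}$ lengthens $C_0$. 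So each of the $r-1$ gaps contains at least one vertex.
\end{itemize}

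Adding the pieces, together with the $r$ vertices $w_j$ themselves, gives
\[
|V(Q_i)|\ge |N_P(v_i)|+|N_P(v_{i+1})|+2r_i-1 .
\]
Since both $|N_P(v_i)|$ and $|N_P(v_{i+1})|$ are at least $1$, we have $|N_P(v_i)|+|N_P(v_{i+1})|-1\ge \tfrac12\big(|N_P(v_i)|+|N_P(v_{i+1})|\big)$, and the claim follows.

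The main obstacle is the bookkeeping in the case $r_i\ge1$: checking that each rerouted cycle really is a cycle (the pieces of $C_0$ and $P$ it uses are disjoint, and $P$ being induced is not even needed here) and that the three types of pieces of $Q_i$ are disjoint, so the lower bounds add. Degenerate situations, such as $v_i=v_{i+1}$ when $s=1$ or an empty $Q_i$, also need a check. These are excluded or trivial because $s\ge k+1\ge 2$ and because the bounds hold vacuously when $Q_i$ contains no neighbour of $u_1$.
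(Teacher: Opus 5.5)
Your proof is correct and follows essentially the same route as the paper. In the case $r_i=0$ you use the same two rerouted cycles through the extreme neighbours of $v_i$ and $v_{i+1}$ on $P$, averaging the two bounds where the paper takes the better one. In the case $r_i\ge 1$ you use the same decomposition of $Q_i$ via the first and last neighbours of $u_1$, together with the fact that no two consecutive vertices of $C_0$ are both adjacent to $u_1$; your final step via $x-1\ge x/2$ replaces the paper's $M_i\ge d_{P'}(v_i)+1$ and $(\varepsilon_i+\varepsilon_{i+1})/2\le 1$, which is only a cosmetic difference.
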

	\begin{proof}
		We prove this claim considering two cases,
		according to whether $r_i = 0$ or $r_i \geq 1$.
		
		Suppose first $r_i =0$,
		that is, $u_1$ has no neighbor in $Q_i$.
		Let $m_i$ and $M_i$ denote the smallest and largest indices, respectively, such that $u_{m_i}, u_{M_i} \in N_{P}(v_i)$. Note that $M_i \geq 2$, and $m_i = 1$ if and only if $\varepsilon_i =1$.
		Since all vertices in $N_{P}(v_i)$ are contained in $P[u_{m_i}, u_{M_i}]$,
		we have 
		$$M_i - m_i + 1 \geq |N_{P}(v_i)| = d_{P'}(v_i) + \varepsilon_i.$$ 
		Similarly, let $m_{i+1}$ and $M_{i+1}$ denote the smallest and largest indices such that $u_{m_{i+1}}, u_{M_{i+1}} \in N_{P}(v_{i+1})$,
		which implies $M_{i+1} - m_{i+1}+ 1 \geq d_{P'}(v_{i+1}) + \varepsilon_{i+1}$.
		Thus, we deduce that
		\begin{eqnarray*}\label{eq3}
			\text{either} \quad M_{i+1} - m_i + 1 &\geq& \frac{1}{2}\left(d_{P'}(v_i)+ d_{P'}(v_{i+1})+ \varepsilon_i + \varepsilon_{i+1}\right),
			\\
			\text{or} \quad M_i- m_{i+1} + 1 &\geq& \frac{1}{2} \left(d_{P'}(v_i) + d_{P'}(v_{i+1})+ \varepsilon_i +\varepsilon_{i+1}\right).
		\end{eqnarray*}\label{eq3}
		Without loss of generality, we may assume that
		the former occurs.
		Since $G$ has circumference $n-k,$ the following cycle has order at most $n-k$.
		$$
		v_i u_{m_i} \cup P[u_{m_i},u_{M_{i+1}} ]\cup u_{M_{i+1}} v_{i+1}\cup \overrightarrow{C_0}[ v_{i+1}, v_i].
		$$
		Thus, we have
		$$
		(M_{i+1} - m_i+1)+(n - k - |V( Q_i)|) \leq n - k,
		$$
		which implies 
		$$
		|V(Q_i)| \geq M_{i+1} - m_i+1 \geq \frac{d_{P}(v_i) + d_{P}(v_{i+1})+ \varepsilon_i + \varepsilon_{i+1}}{2}.
		$$
		This completes the case $r_i=0$.

		Suppose next that $r_i \geq 1$.
		Let $w$ and $w'$ be the neighbors of $u_1$ in $Q_i$
		such that $\overrightarrow{C_0}[v_i^+,w]$ and $\overrightarrow{C_0}[w', v_{i+1}^-]$
		are as short as possible, respectively.
		Note that possibly $w = w'$.
		By the definition,
		all neighbors of $u_1$ in $Q_i$ are contained in $\overrightarrow{C_0}[w, w']$.
		Since 
		$C_0$ is a longest cycle in $G$,
		no two consecutive vertices of $C_0$ can both be adjacent to $u_1$. 
		Thus, we have
		$|V(\overrightarrow{C_0}[w,w'])| \geq 2r_i -1$.
		
		Recall that $M_i$ denote the largest index such that $u_{M_i} \in N_{P}(v_i)$,
		which implies $M_i \geq d_{P'}(v_i) +1$.
		Since $G$ has circumference $n-k,$ the following cycle has order at most $n-k$.
		$$
		v_iu_{M_i}\cup P[u_{M_i},u_1]\cup u_1w \cup \overrightarrow{C_0}[w,v_i]
		$$
		Hence,
		\begin{eqnarray*}
			M_{i} + \left(n - k - |V(\overrightarrow{C_0}[v_i^+,w^-])|\right) \leq n - k,
			\quad 
			\text{ implying } 
			\quad 
			|V(\overrightarrow{C_0}[v_i^+,w^-])| \geq M_i \geq d_{P'}(v_i) +1.
		\end{eqnarray*}
		Similarly,
		we obtain 
		$$
		|V(\overrightarrow{C_0}[{w'}^+,v_{i+1}^-])| \geq d_{P'}(v_{i+1}) + 1.
		$$
		%
		Therefore, \begin{eqnarray*}\label{eq3}
			|V(Q_i)| &=&
			|V(\overrightarrow{C_0}[v_i^+,w^-])| +
			|V(\overrightarrow{C_0}[w,w'])| + 
			|V(\overrightarrow{C_0}[{w'}^+,v_{+1}^-])| 
			\\
			&\ge& 
			\left(d_{P'}(v_i) + 1\right) + \left(2r_i -1\right) + \left(d_{P'}(v_{i+1}) + 1\right)
			\\
			&=& 
			d_{P'}(v_i) + d_{P'}(v_{i+1}) + 1 + 2r_i.
		\end{eqnarray*}
		Since $\frac{\varepsilon_i +\varepsilon_{i+1}}{2} \leq 1$,
		this implies the desired inequality.
	\end{proof}
	\medskip
	
	Recall that 
	$\sum_{i=1}^s \varepsilon_i \leq s$ and 
	$\Delta(G) = d_G(u_1) = 1+ \sum_{i=1}^s (r_i + \varepsilon_i)$.
	By Claim \ref{claim:Qi},
	we can derive the following estimate.
	\begin{align*}
		n - k &= |V(C_0)| = s + \sum_{i=1}^{s} \left| V\left( Q_i \right) \right| \\
		&\geq \sum_{i=1}^s \varepsilon_i + \sum_{i=1}^{s} \left( \frac{d_{P'}(v_i)+ d_{P'}(v_{i+1})+ \varepsilon_i + \varepsilon_{i+1}}{2} +2r_i\right)
		\\
		&\geq \sum_{i=1}^{s} d_{P'}(v_i) + 2 \sum_{i=1}^s \varepsilon_i +2r_i
		\\
		&=\sum_{i=1}^{s} d_{P'}(v_i) +2\Delta(G) -2.
	\end{align*}
	
	Since $|N_G(u_j)\cap V(C_0)|\geq (k+2)-2=k$ for $2\leq j\leq k-1$ and $|N_G(u_k)\cap V(C_0)|\geq (k+2)-1=k+1$, we have that 
	$$
	\sum_{i=1}^{s} d_{P'}(v_i)=|E_G(P',C_0)|=\sum_{j=2}^k |N_G(u_j)\cap V(C_0)|\geq (k-2)k+(k+1)=k^2-k+1.
	$$
	Combining these, we conclude that
	\begin{equation*}\label{eq5}
		n - k -2\Delta(G)+ 2\geq k^2-k+1 .
	\end{equation*}
	Then, we have 
	$$
	\Delta(G) \leq  \frac{n - k^2 +1}{2} .
	$$
	This completes the proof. \hfill $\Box$
	\bigskip
	
	Note that for any $G \in \varPi(n; k)$, the deletion of an arbitrary set of $k$ vertices results in a traceable—and hence connected—subgraph. In particular, this implies that $G$ is $(k+1)$-connected.  Arguing as in Property~\ref{lem1}, we conclude that $G$ contains an induced path with order at least $k+1$. 
	\begin{property}\label{lem3}
		Let $n$ and $k$ be a positive integer with $n\geq k\geq 2$. If $G\in \varPi(n;k)$, then for every vertex $u$ of $G$, $G$ has an induced path of order at least $k+1$ with an endpoint $u$.
	\end{property}
	We now turn to the proof of Theorem~\ref{thm4}. Since the argument closely parallels that of Theorem~\ref{thm3}, we omit some routine details here.
	\bigskip
	
	\noindent{\it Proof of Theorem \ref{thm4}.}  Let $G$ be a graph in $\varPi(n; k)$, and let $x_1$ denote a vertex of maximum degree in $G$. By the definition of $\varPi(n; k)$, the detour order of $G$ is $n-k$. Since $G$ is $(k + 1)$-connected, it follows that the minimum degree is at least  $k + 1$.
	
	By property \ref{lem3}, there exists an induce path $P$ of order $k$ with vertex sequence $x_1, x_2, \ldots, x_k$, where $x_i \sim x_{i+1}$ for all $1 \le i < k$.  Let $P_0$ be a Hamilton path in the induced subgraph $G - V(P)$. Fix an orientation of $P_0$, and denote the resulting directed path by $\overrightarrow{P_0}$.  For a vertex $y \in V(P_0)$, we write $y^+$ for the successor of $y$ and $y^-$ for the predecessor of $y$ along $\overrightarrow{P_0}$. Set $P'=P-x_1$. Denote
	$$
	N_G(P') \cap V(P_0) = \{ y_1, y_2, \ldots, y_s \},
	$$
	where the indices are ordered increasingly along the orientation. Since $G$ is $(k + 1)$-connected and $P'$ is disjoint from $P_0$, it follows that $s \geq k$.  For convenience, let $y_0$ and $y_{s+1}$ denote the two endpoints of the path $P_0$  where $y_0$ is the source of $\overrightarrow{P_0}$. Since the detour order of $G$ is $n - k$, we have  $y_0 \neq y_1$ and $y_{s+1} \neq y_s$.  Otherwise, a longer path could be constructed. For each $y_i \in N_G(P') \cap V(P_0)$, define
	$$
	N_{P}(y_i) = N_G(y_i) \cap V(P), \quad d_{P'}(y_i) = |N_{P}(y_i) \setminus \{x_1\}|,
	\quad \text{and}  \quad \varepsilon_i = 
	\begin{cases} 
		1 & \text{if $x_1$ is a neighbor of $y_i$,} \\
		0 & \text{otherwise}.
	\end{cases} 
	$$
	
	For each $i\in \{1,2,\ldots,s-1 \}, $  we denote $W_i=\overrightarrow{P_0}[y^+_i,y^-_{i+1}]$ and let $r_i = |V(W_i) \cap N_G(x_1)|$. By applying the same argument as in Claim \ref{claim:Qi} of Theorem \ref{thm3}, we obtain the following claim.
	
	\medskip
	\begin{claim}\label{claim2}
		For each $i \in \{1, \ldots, s-1\}$, we have
		\begin{equation*}\label{eq1}
			|V( W_i )| \geq \frac{d_{P'}(y_i) + d_{P'}(y_{i+1})+ \varepsilon_i +\varepsilon_{i+1}}{2} + 2r_i.
		\end{equation*}
	\end{claim}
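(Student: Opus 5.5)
The plan is to copy the proof of Claim~\ref{claim:Qi} almost verbatim, with the Hamilton cycle $C_0$ replaced by the Hamilton path $P_0$ of $G-V(P)$ and the bound on circumference replaced by the bound $n-k$ on the detour order. The one structural change is this: whenever the cycle argument built a cycle containing $\overrightarrow{C_0}[v_{i+1},v_i]$, we now build a path, and it must run through all of $P_0$ outside $W_i$. Since $W_i$ lies strictly between $y_i$ and $y_{i+1}$, the vertices of $P_0$ outside $W_i$ form the two segments $\overrightarrow{P_0}[y_0,y_i]$ and $\overrightarrow{P_0}[y_{i+1},y_{s+1}]$. Any detour through $P$ from $y_i$ to $y_{i+1}$ therefore produces a path of the form
\[
\overrightarrow{P_0}[y_0,y_i]\cup(\text{detour})\cup\overrightarrow{P_0}[y_{i+1},y_{s+1}].
\]
This path has $(n-k-|V(W_i)|)+(\text{number of vertices of }P\text{ used})$ vertices. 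Since it cannot exceed $n-k$, we get $|V(W_i)|\ge$ the number of vertices of $P$ used.

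\textbf{Case $r_i=0$.} Define $m_i,M_i,m_{i+1},M_{i+1}$ as before. These are the extreme indices of the neighbours of $y_i$ and $y_{i+1}$ on $P$, and we have $M_j\ge 2$ since $y_j\in N_G(P')$. As before, $M_j-m_j+1\ge d_{P'}(y_j)+\varepsilon_j$. Averaging gives
\[
M_{i+1}-m_i+1\ge\tfrac12\bigl(d_{P'}(y_i)+d_{P'}(y_{i+1})+\varepsilon_i+\varepsilon_{i+1}\bigr),
\]
or the symmetric inequality with the roles of $i$ and $i+1$ exchanged. In the first case, use the detour $y_iu_{m_i}P[u_{m_i},u_{M_{i+1}}]u_{M_{i+1}}y_{i+1}$ (with $x$'s in place of $u$'s). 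By the counting above, $|V(W_i)|\ge M_{i+1}-m_i+1$. Here $r_i=0$, so this is the desired bound.

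\textbf{Case $r_i\ge1$.} Let $w,w'$ be the first and last neighbours of $x_1$ on $W_i$. No two consecutive vertices of $P_0$ are both adjacent to $x_1$; otherwise $x_1$ could be inserted between them, giving a path of order $n-k+1$. Hence $|V(\overrightarrow{P_0}[w,w'])|\ge 2r_i-1$. Next, let $M_i$ be the largest index with $x_{M_i}\sim y_i$, so $M_i\ge d_{P'}(y_i)+1$. Consider the path
\[
\overrightarrow{P_0}[y_0,y_i]\,y_ix_{M_i}\,P[x_{M_i},x_1]\,x_1w\,\overrightarrow{P_0}[w,y_{s+1}].
\]
It omits exactly $\overrightarrow{P_0}[y_i^+,w^-]$ and gains $M_i$ vertices, so $|V(\overrightarrow{P_0}[y_i^+,w^-])|\ge d_{P'}(y_i)+1$. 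The symmetric path
\[
\overrightarrow{P_0}[y_0,w']\,w'x_1\,P[x_1,x_{M_{i+1}}]\,x_{M_{i+1}}y_{i+1}\,\overrightarrow{P_0}[y_{i+1},y_{s+1}]
\]
gives $|V(\overrightarrow{P_0}[w'^+,y_{i+1}^-])|\ge d_{P'}(y_{i+1})+1$. Summing the three segments yields
\[
|V(W_i)|\ge d_{P'}(y_i)+d_{P'}(y_{i+1})+1+2r_i,
\]
and since $(\varepsilon_i+\varepsilon_{i+1})/2\le1$ the claim follows.

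The only point needing care, and the main potential obstacle, is making sure every constructed path is genuinely a path (vertex-disjoint pieces) that covers all of $P_0$ except the designated subsegment of $W_i$. This holds because $W_i$ sits strictly between $y_i$ and $y_{i+1}$, and the endpoints $y_0,y_{s+1}$ lie outside all the $W_i$ for $1\le i\le s-1$. This is exactly why the claim is restricted to $i\le s-1$ and the wrap-around segment is excluded.
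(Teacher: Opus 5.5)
Your proposal is correct and takes the same approach as the paper, which proves this claim only by pointing back to the argument of Claim~\ref{claim:Qi}. You carry out that transfer faithfully: the wrap-around arc $\overrightarrow{C_0}[v_{i+1},v_i]$ becomes the two end segments $\overrightarrow{P_0}[y_0,y_i]$ and $\overrightarrow{P_0}[y_{i+1},y_{s+1}]$, the circumference bound becomes the detour-order bound, and both the $r_i=0$ and $r_i\ge 1$ cases go through as you wrote them.
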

	
	Let $W_0=\overrightarrow{P_0}[y_0,y_1^{-}]$, $W_s=\overrightarrow{P_0}[y_s^{+}, y_{s+1}]$ and $r_i=|V(W_i)\cap N_G(x_1)|$ for $i\in \{0,s\}$.
	In order to estimate the number of edges in the path $P_0$, we also need the following claim. 
	
	\medskip
	\begin{claim}\label{claim3}
		We have 
		$$
		|V(W_0)| \ge \frac{d_{P'}(y_1) + k+ \varepsilon_1}{2}+2r_0, 
		\qquad
		|V(W_s)| \ge \frac{d_{P'}(y_s) + k+ \varepsilon_{s}}{2}+2r_{s}.
		$$
	\end{claim}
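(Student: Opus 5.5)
The plan is to mimic the proof of Claim~\ref{claim:Qi}. The difference is that $W_0$ is an \emph{end segment} of $P_0$: instead of closing up a cycle, we build paths that enter $P_0$ at $y_1$ (or at a neighbour of $x_1$ in $W_0$) and then run along $P_0$ all the way to an endpoint. Each such path has order at most $n-k$, which gives a lower bound on the number of vertices of $P_0$ it skips. I treat only $W_0$; the bound for $W_s$ follows by reversing the orientation of $\overrightarrow{P_0}$. As before, split according to whether $r_0=0$ or $r_0\ge 1$.

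\emph{Case $r_0=0$.} Let $m$ and $M$ be the smallest and largest indices with $x_m,x_M\in N_P(y_1)$. Since $y_1\in N_G(P')$, we have $M\ge 2$, and $m=1$ if and only if $\varepsilon_1=1$. All of $N_P(y_1)$ lies in $P[x_m,x_M]$, so $M-m+1\ge d_{P'}(y_1)+\varepsilon_1$.
\begin{itemize}
\item The path $x_1x_2\cdots x_M\, y_1\, \overrightarrow{P_0}[y_1,y_{s+1}]$ has order $M+(n-k-|V(W_0)|)\le n-k$, so $|V(W_0)|\ge M$.
\item The path $x_kx_{k-1}\cdots x_m\, y_1\,\overrightarrow{P_0}[y_1,y_{s+1}]$ has order $(k-m+1)+(n-k-|V(W_0)|)\le n-k$, so $|V(W_0)|\ge k-m+1$.
\end{itemize}
Adding the two bounds gives $2|V(W_0)|\ge M-m+1+k\ge d_{P'}(y_1)+\varepsilon_1+k$, which is the claim when $r_0=0$.

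\emph{Case $r_0\ge1$.} Let $w'$ and $w$ be the first and last neighbours of $x_1$ in $W_0$ along $\overrightarrow{P_0}$ (possibly $w=w'$). We bound three disjoint pieces of $W_0$.
\begin{itemize}
\item \emph{Before $w'$.} The path $x_k\cdots x_1\,w'\,\overrightarrow{P_0}[w',y_{s+1}]$ has order $k+n-k-|V(\overrightarrow{P_0}[y_0,w'^-])|$. Hence $|V(\overrightarrow{P_0}[y_0,w'^-])|\ge k$; in particular $w'\ne y_0$.
\item \emph{Between $w'$ and $w$.} No two consecutive vertices of $P_0$ are both adjacent to $x_1$, since otherwise we could insert $x_1$ between them and obtain a path of order $n-k+1$. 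So $|V(\overrightarrow{P_0}[w',w])|\ge 2r_0-1$.
\item \emph{After $w$.} With $M$ as above, $M\ge d_{P'}(y_1)+1$, because the $P'$-neighbours of $y_1$ lie in $\{x_2,\dots,x_M\}$. Consider the path that starts at $y_{s+1}$, runs backwards along $P_0$ to $y_1$, then goes $x_M x_{M-1}\cdots x_1$, then jumps to $w$ and runs backwards along $P_0$ to $y_0$. It has order $(n-k-|V(\overrightarrow{P_0}[w^+,y_1^-])|)+M\le n-k$. Therefore $|V(\overrightarrow{P_0}[w^+,y_1^-])|\ge d_{P'}(y_1)+1$.
\end{itemize}
Summing the three pieces gives $|V(W_0)|\ge k+d_{P'}(y_1)+2r_0$. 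This is at least $\frac{d_{P'}(y_1)+k+\varepsilon_1}{2}+2r_0$, since $k+d_{P'}(y_1)\ge 1\ge\varepsilon_1$.

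The main point needing care is the bookkeeping in the case $r_0\ge1$. One must check that the three segments are disjoint: $y_0$ precedes $w'$, and $w$ precedes $y_1$ because $w\in W_0$. One must also check that each constructed walk is a genuine path. This holds because $P$ is disjoint from $P_0$ and the $P_0$-pieces used in each path do not overlap. Everything else is a direct length count against the detour order $n-k$, exactly as in Claim~\ref{claim:Qi}.
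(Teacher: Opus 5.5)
Your proof is correct and is essentially the paper's argument. It uses the same split on $r_0$, the same two paths entering $P_0$ at $y_1$ via $x_m$ and $x_M$ when $r_0=0$, and the same three-piece decomposition of $W_0$ when $r_0\ge 1$: at least $k$ vertices before the first neighbour of $x_1$, at least $2r_0-1$ from the first to the last, and at least $M\ge d_{P'}(y_1)+1$ after the last. You also write out the path bounding the final piece explicitly, which the paper leaves implicit, and your labels $w,w'$ are swapped relative to the paper's, which is immaterial.
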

	
	\begin{proof}
		Let $m_1$ be the smallest index with $x_{m_1} \in N_{P}(y_1)$, and $M_1$ the largest index with $x_{M_1} \in N_{P}(y_1)$. Consider the two paths constructed from $P_0$ in the following manner. Since $P_0$ is a longest path, the orders of these two paths cannot exceed $|V(P_0)|$.
		
		$$
		P[x_k, x_{m_1}] \cup x_{m_1} y_1 \cup  P_0 [y_1, y_{s+1}]
		\quad\text{and}\quad
		P[x_1, x_{M_1}] \cup x_{M_1} y_1 \cup   P_0 [y_1,y_{s+1}].
		$$
		
		Suppose that $r_0 = 0$. In this case, we have
		$$
		|V(W_0)| \ge |V(P[x_k, x_{m_1}])|\geq  k-m_1 +1\quad\text{and}\quad |V(W_0)| \ge |V(P[x_1, x_{M_1}])|\geq M_1.
		$$
		Hence 
		\begin{align*}
			|V(W_0)| 
			\ge \frac{k-m_1+1+M_1}{2}  
			= \frac{k + M_1 - m_1 + 1}{2} 
			\ge \frac{k + d_{P'}(y_1)+\varepsilon_1}{2}
			= \frac{k + d_{P'}(y_1)+\varepsilon_1}{2}+2r_0.
		\end{align*}
		
		Now suppose that $r_0\ge 1$, and let $w$ and $w'$ be the neighbors of $x_1$ in $W_0=\overrightarrow{P_0}[y_0, y_1^{-}]$ such that $\overrightarrow{P_0}[y_0,w]$ and $\overrightarrow{P_0}[w', y_1^-]$ are as short as possible, respectively. We can deduce that 
		\begin{align*}
			|V(W_0)| &= |V(\overrightarrow{P_0}[y_0, w^{-}])|+|V(\overrightarrow{P_0}[w, w'])|+|V(\overrightarrow{P_0}[w'^+, y_1^-])|\\
			&\geq  k+2r_0-1+d_{p'}(y_1)+1\\
			&\geq \frac{k + d_{P'}(y_1)+\varepsilon_1}{2}+2r_0.
		\end{align*}

		The same argument applied to $W_s$ yields
		$$
		|V(W_s)| \ge \frac{k + d_{P'}(y_s)+\varepsilon_s}{2}+2r_s.
		$$
	\end{proof}
	
	By Claims~\ref{claim2} and~\ref{claim3}, since $s\geq \sum_{i=1}^{s} \varepsilon_i$ and  $\Delta(G)=1+\sum_{i=1}^{s} \varepsilon_i+\sum_{i=0}^{s}r_i$, we have
	\begin{align*}
		n - k &= |V(P_0)| \\
		&= s+	|V( W_0) | + \sum_{i=1}^{s-1} \left| V\left(W_i  \right) \right| +	|V(W_s)| \\
		&\geq s+ \frac{k + d_{P'}(y_1)+\varepsilon_1}{2}+2r_0+\sum_{i=1}^{s-1} \left( \frac{d_{P'}(y_i) + d_{P'}(y_{i+1})+ \varepsilon_i + \varepsilon_{i+1}}{2}
		+ 2r_i\right) \\
		&\quad +\frac{k + d_{P'}(y_s)+\varepsilon_s}{2}+2r_s\\
		&= s+k+\sum_{i=1}^{s} d_{P'}(y_i) +\sum_{i=1}^{s} \varepsilon_i+2\sum_{i=0}^{s}r_i\\
		&\ge k+|E_G(P', P_0)| + 2\Delta(G)-2.
	\end{align*}
	and hence
	$$ n - 2k - 2\Delta(G)+2\geq |E_G(P', P_0)| . $$
	
	On the other hand, since $P'$ is an induced path, we have
	\begin{align*}
		|E_G(P', P_0)| \geq k+(k-2)(k-1).
	\end{align*}
	Combining these, we conclude that
	$$
	n - 2k - 2\Delta(G)+2\geq k+(k-2)(k-1) ,
	$$
	which forces
	$$
	\frac{n-k^2}{2}\geq \Delta(G).
	$$
	This completes the proof.   \qed
	
	\section*{Acknowledgement} Masaki Kashima was supported by JSPS KAKENHI, Grant number 25KJ2077. Kenta Ozeki was supported by JSPS KAKENHI, Grant Numbers 22K19773 and 23K03195. Leilei Zhang's work was supported by JSPS KAKENHI Grant Number 25KF0036, the NSF of Hubei Province Grant Number 2025AFB309,  the China Postdoctoral Science Foundation  Grant Number 2025M773113, the Fundamental Research Funds for the Central Universities, Central China Normal University Grant Number CCNU24XJ026.

	\section*{Declarations}
	
	\begin{itemize}
		\item \textbf{Conflict of interest}\quad The authors declare that they have no known competing financial interests or personal
		relationships that could have appeared to influence the work reported in this paper.
		\item \textbf{Data availibility statement}\quad This manuscript has no associated data.
	\end{itemize}

\end{document}